\numberwithin{equation}{section}
\numberwithin{figure}{section}
\newenvironment{lyxlist}[1]
{\begin{list}{}
{\settowidth{\labelwidth}{#1}
 \setlength{\leftmargin}{\labelwidth}
 \addtolength{\leftmargin}{\labelsep}
 }}
{\end{list}}
\theoremstyle{plain}
\newtheorem{thm}{\protect\theoremname}
  \theoremstyle{plain}
  \newtheorem{lem}[thm]{\protect\lemmaname}
  \theoremstyle{plain}
  \newtheorem{cor}[thm]{\protect\corollaryname}
 \theoremstyle{definition}
 \newtheorem*{defn*}{\protect\definitionname}
  \providecommand{\corollaryname}{Corollary}
  \providecommand{\definitionname}{Definition}
  \providecommand{\lemmaname}{Lemma}
\providecommand{\theoremname}{Theorem}
\begin{document}

\title[The Perpendicular Bisector Construction in $n$-Dimensional geometry]{The Perpendicular Bisector Construction in $n$-Dimensional Euclidean
and Non-Euclidean Geometries}

\author{Emmanuel Tsukerman}

\date{\today}
\begin{abstract}
The {}``Perpendicular Bisectors Construction'' is a natural way
to seek a replacement for the circumcenter of a noncyclic quadrilateral
in the plane. In this paper, we generalize this iterative construction
to a construction on polytopes with $n$ vertices in $(n-2)$-dimensional
Euclidean, Hyperbolic and Elliptic geometries. We then show that a
number of nice properties concerning this iterative construction continue
to hold in these geometries. We also introduce an analogue of the
isoptic point of a quadrilateral, which is the limit point of the
Perpendicular Bisectors Construction, in $\mathbb{R}^{n}$ and prove
some of its properties.
\end{abstract}
\maketitle

\section{Background}

A natural way to seek a replacement for the circumcenter of a cyclic
planar quadrilateral in the case when the quadrilateral is noncyclic
is to proceed with the following iterative construction: 
\begin{itemize}
\item For every $3$ vertices of a quadrilateral $Q^{(1)}$, determine the
circumcenter. The resulting 4 points form a new quadrilateral $Q^{(2)}$.
The construction can then be iterated on $Q^{(2)}$ and then on $Q^{(3)}$,
etc.
\end{itemize}
This construction is known as the {}``Perpendicular Bisectors Construction''
since the sides of $Q^{(i+1)}$ are determined using the perpendicular
bisectors of the sides of $Q^{(i)}$. 

The construction is so natural that it was looked at before a number
of times. In particular, the following problem about the Perpendicular
Bisectors Construction was proposed by Josef Langr \cite{Langr} in
1953:

\emph{The perpendicular bisectors of the sides of a quadrilateral
$ABCD$ form a quadrilateral $A_{1}B_{1}C_{1}D_{1}$ and the perpendicular
bisectors of the sides of $A_{1}B_{1}C_{1}D_{1}$ form a quadrilateral
$A_{2}B_{2}C_{2}D_{2}$. Show that $A_{2}B_{2}C_{2}D_{2}$ is similar
to $ABCD$ and find the ratio of similitude.}

Given that the problem is relatively simple, it is surprising that
no solutions were published in English for over half a century. The
problem was mentioned by C.S. Ogilvy (\cite{Ogilvy}, p. 80) as an
example of an unsolved problem. According to an article on Alexander
Bogomolny's \emph{Cut-the-knot }website \cite{cut-the-knot}, \emph{{}``B.
Grünbaum \cite{Grunbaum} wrote about the problem in 1993 as an example
of an unproven problem whose correctness could not be doubted... {[}D.
Schattschneider{]} proved several particular cases of the problem,
but the general problem remained yet unsolved. It looks like, by that
time, the problem made it into the mathematical folklore. It reached
Dan Bennett by the word of mouth and its simplicity had piqued his
interest. He published a solution \cite{D. Bennet} in 1997 to a major
part of the problem under an additional assumption that was promptly
removed by J. King \cite{King} who (independently) also supplied
a proof based on the same ideas''}. A paper by G.C. Shepard \cite{Shepard}
also found an expression for the ratio, and several simpler forms
of the expression are given by Radko and Tsukerman in \cite{RadkoTsukerman}.

In the same paper, Radko and Tsukerman show that the construction
(or, if $ABCD$ is non-convex, the reverse construction) has a limit
called the \textit{isoptic point}, due to its property of {}``being
seen'' at equal angles from each of the triad circles of the quadrilateral.
This point has many beautiful properties, such as having a parallelogram
pedal, being the unique intersection of the $6$ circles of similitude
of a quadrilateral and having many of the properties expected of a
replacement of the circumcenter.

\section{Main Results}

We introduce a generalization to the Perpendicular Bisectors Construction,
which we apply to polytopes with $n$ vertices in $(n-2)$-dimensional
Euclidean, Hyperbolic and Elliptic geometries. We prove the remarkable
property that for any dimension and any geometry previously mentioned,
the $i$th generation polyope $P^{(i)}$ and $(i+2)$th generation
polytope $P^{(i+2)}$ are in perspective for each $i$. After showing
how the iterative construction in any of the geometries can be reversed
via isogonal conjugation, we show that in the case of Euclidean geometry,
all $P^{(2k)}$ are homothetic and all $P^{(2k+1)}$ are homothetic,
and the center of homothecy is the same for both families of polytopes.
Finally, we define an analogue of the isoptic point in $\mathbb{R}^{n}$
and prove some of its properties.

\section{Preliminaries and Notation}

We consider $d$-dimensional Euclidean, Hyperbolic or Elliptic space,
where $d=n-2$. Recall that a hyperplane is a $(d-1)$-flat and that
the mediator hyperplane of a segment $P_{1}P_{2}$, denoted $PB(P_{1}P_{2})$
throughout, is the hyperplane passing through the midpoint of $P_{1}P_{2}$
orthogonal to that segment. By a hypersphere, we will specifically
mean a $(d-1)$-sphere. A facet of a polytope is a face with affine
dimension $d-1$.

Our approach to proving the perspectivity $P^{(i)}$ and $P^{(i+2)}$
will naturally involve projective geometry. Specifically, we will
view Euclidean, Hyperbolic and Elliptic geometries as embedded inside
of real projective $d$-space. For the convenience of the reader,
we now give a brief overview of how to do so.

Recall that a correlation in real projective $n$-space $\mathbb{R}P^{n}$
is a one-to-one linear transformation taking points into hyperplanes
and vice verse. A \textit{polarity} is an involutory correlation,
and we call the image of a point $P$ under a polarity its \textit{polar},
and the image of a hyperplane $q$ its \textit{pole}. We shall utilize
the following facts:
\begin{enumerate}
\item A point $P$ is on the polar of a point $Q$ under a given polarity
if and only if $Q$ is on the polar of $P$ under this same polarity.
Similarly, a hyperplane $p$ is incident to the pole of hyperplane
$q$ if and only if $q$ is incident to the pole of $p$. We call
such $P$ and $Q$ \textit{conjugate points }and such $p$ and $q$
\textit{conjugate hyperplanes}. A point that lies on its own polar
is called a\textit{ self-conjugate point}. Similarly, a hyperplane
incident with its own pole is a \textit{self-conjugate hyperplane}.
\item A nonempty set of self-conjugate points with respect to a given polarity
is a quadric and any quadric is a set of self-conjugate points with
respect to some polarity. 
\end{enumerate}
As an illustration, given a point $P$ outside of a conic in the projective
plane, there are two tangents passing through $P$. The polar of $P$
is the line incident to the two points of tangency.

\begin{figure}[h]
\includegraphics[scale=0.3]{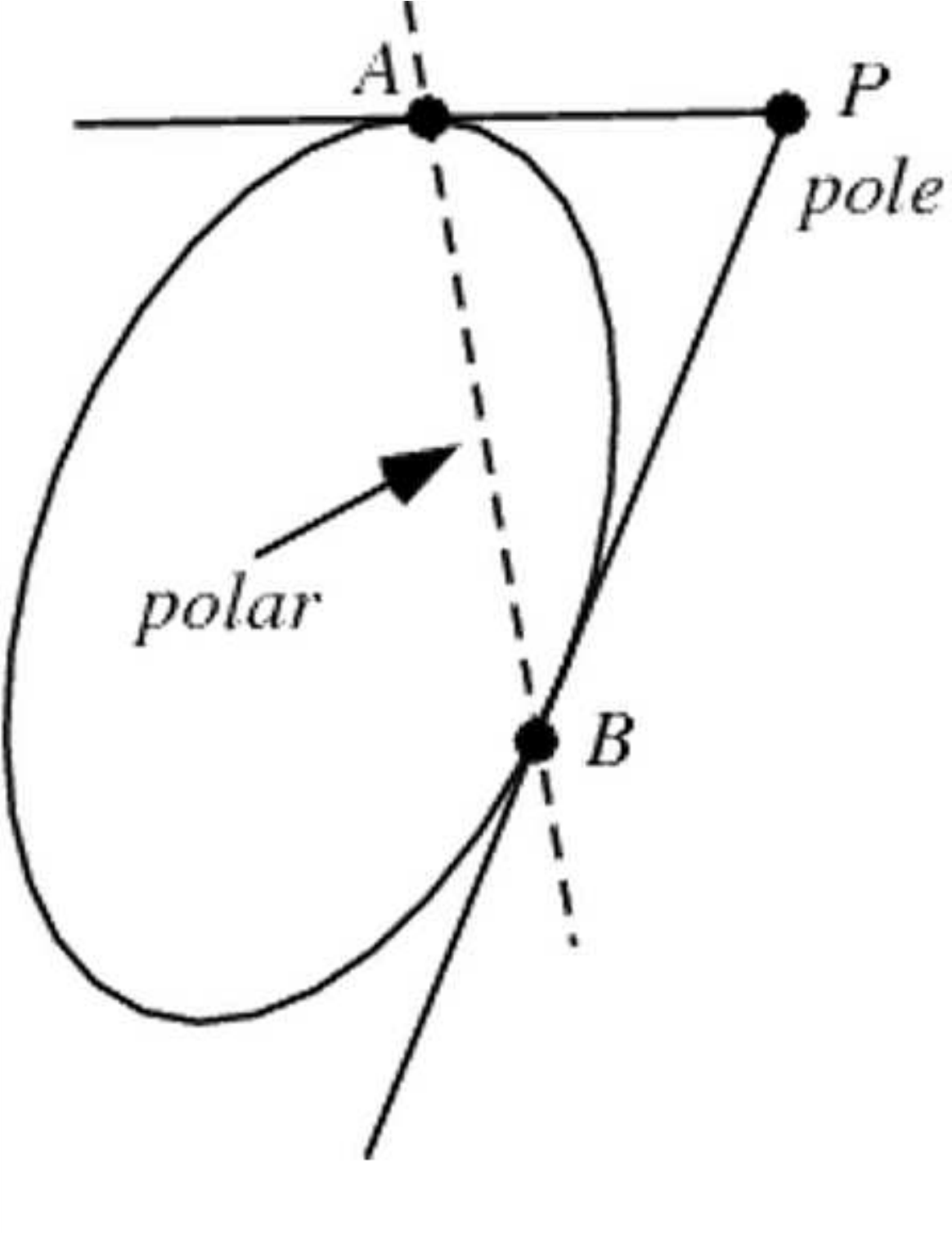}

\caption{Pole-polar relation in the projective plane \cite{Mathworld}.}

\end{figure}

To obtain Euclidean, Hyperbolic and Elliptic geometries as subgeometries
of projective geometry we fix a polarity and an associated quadric
$\Gamma$, depending on the geometry. We make the following identifications
in the projective plane, and the more general identification for projective
$n$-space are similar. 
\begin{enumerate}
\item For Hyperbolic geometry, the points inside of the quadric are the
the (ordinary) points of the geometry, points on the quadric are \textit{ideal
points }and points outside of the quadric are \textit{hyperideal points}.
Hyperbolic lines are the parts of the projective lines having ordinary
points. Two hyperbolic lines are parallel (ultraparallel) if the corresponding
projective lines intersect in ideal (ultraideal) points. They are
perpendicular if they are conjugate with respect to $\Gamma$.
\item For Elliptic geometry, the ordinary points are the points of the projective
plane and the lines are the lines of the projective plane. Two elliptic
lines are perpendicular if the corresponding projective lines are
conjugate with respect to $\Gamma$.
\item In Euclidean geometry, the ordinary points are the points of the projective
plane not on $\Gamma$ and the ideal points are the points on $\Gamma$.
Two lines are perpendicular if their ideal points correspond under
the absolute projectivity. 
\end{enumerate}
We refer the reader to \cite{Cederberg} for a more comprehensive
discussion on the subgeometries of projective space, e.g., on the
defiinition of angles, distances, etc. 

$ $

We list here some of the notation which will be employed throughout:
\begin{lyxlist}{00.00.0000}
\item [{$N^{(i)}$}] \noindent is the $i$th generation set of vertices
constructed via the iterative process.
\item [{$(P_{1}\cdot\cdot\cdot P_{n-1})$}] \noindent will denote the unique
hypersphere through points $P_{1},...,P_{n-1}$.
\item [{$PB(P_{1}P_{2})$}] \noindent will denote the mediator of line
segment $P_{1}P_{2}$.
\item [{$PB(H_{1},H_{2})$}] \noindent will denote a common perpendicular
of hyperplanes $H_{1}$ and $H_{2}$. In Euclidean geometry, this
will simply mean that $PB(H_{1},H_{2})$ is perpendicular to both
$H_{1}$ and $H_{2}$.
\item [{$Iso_{P_{1}\cdot\cdot\cdot P_{n-1}}P_{n}$}] \noindent denotes
the isogonal conjugate of the point $P_{n}$ in the simplex $P_{1}\cdot\cdot\cdot P_{n-1}$.
\item [{$P^{(i)}\sim P^{(j)}$}] \noindent denotes that a polytope $P^{(i)}$
with vertices $N^{(i)}$ can be chosen to have the same combinatorial
type as a polytope $P^{(j)}$ with vertices $N^{(j)}$, and $P^{(i)}$
and $P^{(j)}$ are similar.
\item [{$|P^{(i)}|$}] \noindent denotes the volume of $P^{(i)}$.
\end{lyxlist}

\section{The Generalized Iterative Process}

Consider a set $N^{(1)}$ of $n$ points $V_{1},V_{2},...,V_{n}$
in $(n-2)$-dimensional space $V$. For convenience, we will say that
$V_{i}=V_{n+i}$ for each $i$. When $V$ is Euclidean geometry, we
will require that any $n-1$ be affinely independent and in Hyperbolic
geometry, we will also require that any $n-1$ can be circumscribed
in a hypersphere, i.e. the circumcenter of the hypersphere is an ordinary,
rather than ideal or hyperideal, point. Our generalization of the
iterative process is as follows. 
\begin{itemize}
\item For each vertex $V_{i}$, $i=1,...,n$, construct the center $V_{i}^{(2)}$
of a hypersphere $(V_{i+1}\cdot\cdot\cdot V_{i+n-1})$. 
\end{itemize}
The vertices $V_{i}^{(2)}$, $i=1,...,n$, determine a new set of
$n$ points, which we will denote by $N^{(2)}$. 
\begin{itemize}
\item The construction is then repeated on $N^{(2)}$ to produce $N^{(3)}$,
etc.
\end{itemize}
It is easy to see that $N^{(2)}$ degenerates to a single point if
and only if the points of $N^{(1)}$ are conhyperspherical, meaning
that they can be inscribed in a hypersphere.

Moreover,
\begin{lem}
\label{lem:affinely dependent}In $(n-2)$-dimensional Euclidean geometry,
the set $N^{(2)}$ contains a point at infinity (an ideal point) if
and only if some $n-1$ points of $N^{(1)}$ are affinely dependent.\end{lem}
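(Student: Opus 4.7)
The plan is to reduce the lemma to a rank computation via the perpendicular-bisector characterization of the circumcenter. By definition $V_{i}^{(2)}$ is equidistant from each of $V_{i+1},\ldots,V_{i+n-1}$, and squaring the equalities $\|x-V_{i+1}\|=\|x-V_{i+j}\|$ shows that it satisfies the system of $d=n-2$ linear equations
\[
\langle V_{i+j}-V_{i+1},\,x\rangle \;=\; \tfrac{1}{2}\bigl(\|V_{i+j}\|^{2}-\|V_{i+1}\|^{2}\bigr),\qquad j=2,\ldots,n-1.
\]
The coefficient matrix of this system has rows $V_{i+j}-V_{i+1}$, so it is invertible precisely when these $d$ vectors are linearly independent, which is equivalent to the $n-1$ points $V_{i+1},\ldots,V_{i+n-1}$ being affinely independent.

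For the $(\Leftarrow)$ direction I would assume, after a cyclic relabelling, that $V_{i+1},\ldots,V_{i+n-1}$ are affinely dependent. The coefficient matrix then has rank strictly less than $d$, so I can pick a nonzero $w\in\mathbb{R}^{n-2}$ orthogonal to every $V_{i+j}-V_{i+1}$. Equivalently, $w$ is a direction lying inside each bisector hyperplane $PB(V_{i+1}V_{i+j})$, so after passing to the projective completion of $\mathbb{R}^{n-2}$ the ideal point $[w]$ lies on every one of the $d$ projectivized bisectors, and hence in their common intersection. Because the affine system is either inconsistent (no finite circumcenter) or underdetermined (a positive-dimensional flat of finite candidates whose projective closure still meets infinity along $[w]$), the natural reading of $V_{i}^{(2)}$ as the projective intersection of these bisectors produces the ideal point $[w]$, and $N^{(2)}$ contains a point at infinity. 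The $(\Rightarrow)$ direction I would handle by the contrapositive: if every $(n-1)$-subset of $N^{(1)}$ is affinely independent, then for each $i$ the coefficient matrix above is invertible and Cramer's rule yields a unique ordinary circumcenter $V_{i}^{(2)}\in\mathbb{R}^{n-2}$, so $N^{(2)}$ contains no ideal point.

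The main obstacle I anticipate is interpretive rather than computational: in the degenerate case no honest hypersphere circumscribes the $n-1$ points, so one has to commit to a projective (or equivalently, limiting) interpretation of $V_{i}^{(2)}$ before the statement is even well-posed. Once that convention is in place, the lemma collapses to the single rank observation above.
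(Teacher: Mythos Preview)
Your argument is correct and is essentially the same linear-algebraic reduction the paper uses: the paper writes the sphere through $V_{i+1},\ldots,V_{i+n-1}$ as a determinant and observes that the coefficient of the quadratic term is exactly the affine-independence determinant $\det\bigl(V_{i+j}\ \vert\ 1\bigr)$, so the center is ideal iff that minor vanishes; you reach the identical rank condition via the perpendicular-bisector system, whose coefficient matrix has rows $V_{i+j}-V_{i+1}$. Your explicit remark about the projective interpretation of $V_i^{(2)}$ in the degenerate case is a point the paper leaves implicit.
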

\begin{proof}
Assume that some $n-1$ points $V_{i}=(x_{i1},...,x_{id}),$ $i=1,...,n-1$,
of $N^{(1)}$ are affinely dependent. The equation of the hypersphere
passing through such $n-1$ points is given by
\[
\left|\left(\begin{array}{cccccccc}
x_{1}^{2}+x_{2}^{2}+...+x_{d}^{2} & x_{1} & x_{2} & \cdot & \cdot & \cdot & x_{d} & 1\\
x_{11}^{2}+x_{12}^{2}+...+x_{1d}^{2} & x_{11} & x_{12} & \cdot & \cdot & \cdot & x_{1d} & 1\\
x_{21}^{2}+x_{22}^{2}+...+x_{2d}^{2} & x_{21} & x_{22} & \cdot & \cdot & \cdot & x_{2d} & 1\\
\cdot & \cdot & \cdot & \cdot &  &  & \cdot & \cdot\\
\cdot & \cdot & \cdot &  & \cdot &  & \cdot & \cdot\\
\cdot & \cdot & \cdot &  &  & \cdot & \cdot & \cdot\\
x_{n-1,1}^{2}+x_{n-1,2}^{2}+...+x_{n-1,d}^{2} & x_{n-1,1} & x_{n-1,2} & \cdot & \cdot & \cdot & x_{n-1,d} & 1
\end{array}\right)\right|=0
\]

By expanding minors across the first row, we can find the coefficient
of the quadratic terms of the hypersphere to be 
\[
\left|\left(\begin{array}{ccccccc}
x_{11} & x_{12} & \cdot & \cdot & \cdot & x_{1d} & 1\\
x_{21} & x_{22} & \cdot & \cdot & \cdot & x_{2d} & 1\\
\cdot & \cdot & \cdot &  &  & \cdot & \cdot\\
\cdot & \cdot &  & \cdot &  & \cdot & \cdot\\
\cdot & \cdot &  &  & \cdot & \cdot & \cdot\\
x_{n-1,1} & x_{n-1,2} & \cdot & \cdot & \cdot & x_{n-1,d} & 1
\end{array}\right)\right|,
\]

which is zero, so the center of the hypersphere is an ideal point.

Conversely, if the center of the hypersphere is ideal, the coefficient
of the quadratic terms must be zero.
\end{proof}
We will call $N^{(k)}$ degenerate if it contains $n-1$ points which
are are affinely dependent. 
\begin{lem}
In $(n-2)$-dimensional Euclidean geometry, if the set $N^{(1)}$
is nondegenerate and is not conhyperspherical, then any $n-1$ points
of $N^{(2)}$ are affinely independent.\end{lem}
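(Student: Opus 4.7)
The plan is to argue by contradiction. Assume some $n-1$ of the circumcenters, say $V_{1}^{(2)},\dots,V_{n-1}^{(2)}$, lie in an affine hyperplane $H\subset\mathbb{R}^{n-2}$ with nonzero normal $\vec{n}$; I will derive that $N^{(1)}$ is either degenerate or conhyperspherical, contradicting the hypotheses. First I eliminate coincidences: if $V_i^{(2)}=V_j^{(2)}$ for some $i\neq j$, then the two hyperspheres $S_i,S_j$ passing through $\{V_k:k\neq i\}$ and $\{V_k:k\neq j\}$ share a center and also share the $n-2$ affinely independent points $\{V_k:k\neq i,j\}$, hence have the same radius, hence coincide, and then contain every $V_k$---making $N^{(1)}$ conhyperspherical. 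So the $V_i^{(2)}$ are pairwise distinct for $i=1,\dots,n-1$.

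The key geometric step is to observe that for each pair $i\neq j$ in $\{1,\dots,n-1\}$, the line $V_i^{(2)}V_j^{(2)}$ is perpendicular to the affine span $R_{ij}:=\mathrm{aff}\{V_k:k\neq i,j\}$. Indeed, $V_i^{(2)}V_j^{(2)}$ is the line of centers of $S_i$ and $S_j$ and is therefore perpendicular to their radical hyperplane; by nondegeneracy of $N^{(1)}$ the $n-2$ common points $\{V_k:k\neq i,j\}$ are affinely independent, so they span a hyperplane which must coincide with the radical hyperplane, namely $R_{ij}$. Hence the nonzero vector $V_i^{(2)}-V_j^{(2)}$ spans the $1$-dimensional orthogonal complement $\vec{R}_{ij}^{\perp}$. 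Since $V_i^{(2)}-V_j^{(2)}\in\vec{H}$, we have $\vec{n}\perp(V_i^{(2)}-V_j^{(2)})$, which forces $\vec{n}\in\vec{R}_{ij}$.

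To produce the contradiction I exploit the unique affine dependence among $V_1,\dots,V_n$. Since these $n$ points affinely span $\mathbb{R}^{n-2}$ (by nondegeneracy), there is a relation $\sum_{k=1}^{n}\mu_k V_k=0$ with $\sum\mu_k=0$, unique up to scale; moreover every $\mu_k\neq 0$, for otherwise the remaining $n-1$ points would be affinely dependent. Writing $\vec{n}=\sum c_k V_k$ with $\sum c_k=0$, any other such representation differs by a multiple of $(\mu_1,\dots,\mu_n)$, so the condition $\vec{n}\in\vec{R}_{ij}$---which demands a representation with vanishing $i$th and $j$th coefficients---is equivalent to $c_i/\mu_i=c_j/\mu_j$. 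Imposing this for all $i,j\in\{1,\dots,n-1\}$ gives $c_i=-\alpha\mu_i$ for $i\leq n-1$ and a common scalar $\alpha$; then $\sum c_k=0=\sum\mu_k$ yields $c_n=-\alpha\mu_n$ as well, so $(c_k)=-\alpha(\mu_k)$ and $\vec{n}=-\alpha\sum\mu_k V_k=0$, contradicting $\vec{n}\neq 0$. The most delicate point---and the one where nondegeneracy is indispensable---is the nonvanishing of every $\mu_k$, without which the scalar equations $c_i/\mu_i=c_j/\mu_j$ lose meaning and the final bookkeeping collapses.
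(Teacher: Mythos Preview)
Your proof is correct and takes a genuinely different route from the paper's. Both arguments start from the same key observation---that the line of centers $V_i^{(2)}V_j^{(2)}$ is orthogonal to the facet hyperplane $R_{ij}=\mathrm{aff}\{V_k:k\neq i,j\}$---but they diverge sharply thereafter. The paper chooses coordinates so that the assumed hyperplane through $V_1^{(2)},\dots,V_{n-1}^{(2)}$ is $\{x_d=0\}$, writes out the radical hyperplanes explicitly, and then asserts (``it is easy to see'') that the $2$-flats $\mathrm{aff}\{V_{n-2},V_{n-1},V_n\}$ and $\mathrm{aff}\{V_1,V_{n-1},V_n\}$ are parallel; since they share $V_{n-1},V_n$ they coincide, producing four affinely dependent $V_k$. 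Your argument is coordinate-free and more algebraic: you convert the geometric condition $\vec{n}\in\vec{R}_{ij}$ into the ratio equality $c_i/\mu_i=c_j/\mu_j$ via the unique-up-to-scale affine relation $\sum\mu_kV_k=0$, and a short bookkeeping then forces $\vec{n}=0$. This buys you a cleaner and more self-contained proof: the paper's parallelism claim tacitly relies on the relevant intersections of radical hyperplanes being exactly $2$-dimensional, which it does not verify, whereas your argument needs no such genericity. Your explicit elimination of coincident centers (reducing to the conhyperspherical case) is also a point the paper passes over; it notes the spheres are non-tangential but does not rule out their coinciding before invoking the radical hyperplane.
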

\begin{proof}
For convenience, we denote the hypersphere $(V_{i+1}\cdot\cdot\cdot V_{n+i-1})$
by $(V_{i}^{(2)})$. Assume by contradiction that some $n-1$ points
$V_{1}^{(2)},V_{2}^{(2)},...,V_{n-1}^{(2)}$ of $N^{(2)}$ are affinely
dependent. Then they must lie on a $(d-1)$-flat. We can then set
up our coordinate system so that hypersphere $(V_{i}^{(2)})$ has
the expression
\[
(x_{1}-c_{ix_{1}})^{2}+(x_{2}-c_{ix_{2}})^{2}+...+(x_{d-2}-c_{ix_{d-2}})^{2}+(x_{d-1}-c_{ix_{d-1}})^{2}+x_{d}^{2}=r_{i}^{2}.
\]

The intersection of any two hyperspheres $(V_{i}^{(2)})\cap(V_{j}^{(2)})$
contains $n-2$ points from $N^{(1)}$, distinct by hypothesis, so
that the hyperspheres are non-tangential. In addition, the intersection
lies on a hyperplane of the form 
\[
x_{1}(2c_{ix_{1}}-2c_{jx_{1}})+...+x_{d-1}(2c_{ix_{d-1}}-2c_{jx_{d-1}})=r_{j}^{2}-r_{i}^{2}+(c_{ix_{1}}^{2}-c_{jx_{1}}^{2})+...+(c_{ix_{d-1}}^{2}-c_{jx_{d-1}}^{2}).
\]

It easy to see then that the points $V_{n-2},V_{n-1},V_{n}\in\bigcap_{i=1}^{d-1}(V_{i}^{(2)})$
lie on a $2$-flat parallel to the $2$-flat on which $V_{1},V_{n-1},V_{n}\in\bigcap_{i=2}^{d}(V_{i}^{(2)})$
lie. Since the two planes intersect, they must be equal. Therefore
$V_{1},V_{n-2},V_{n-1},V_{n}$ are affinely dependent, a contradiction.
\end{proof}
From now on, we will tacitly assume that $N^{(1)}$ is nondegenerate
and not conhyperspherical. 

Our approach to proving the perspectivity $N^{(i)}$ and $N^{(i+2)}$
will naturally be through projective geometry. Specifically, we will
view Euclidean, Hyperbolic and Elliptic geometries as embedded inside
of real projective $n$-space. See the preliminaries section for an
overview of the relevant facts on projective geometry.

Let $\Gamma$ be a quadric in real projective $(n-2)$-space $\mathbb{R}P^{n-2}$.
Choose a polarity that fixes $\Gamma$. Let $H_{i}$ and $H_{i}'$
for $i=1,...,m$ be $m$ pairs of distinct hyperplanes and let $H_{i}\cap H_{i}'=h_{i},\mbox{ }i=1,...,m$.
In addition, let $H_{i}''$ be the polar of $h_{i}$ for each $i=1,...,m$.
We then have
\begin{lem}
\label{lem:quadric and polars}The $m$ $(n-4)-$flats $h_{1},...,h_{m}$
lie on a hyperplane if and only if the $m$ lines $H_{1}'',...,H_{m}''$
are concurrent.\end{lem}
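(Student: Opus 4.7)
The plan is to deduce the equivalence directly from the incidence-reversing property of the polarity, extended from the point-hyperplane duality of Fact (1) to arbitrary flats. Denote the polarity by $\pi$. Since $\pi$ is a correlation, it extends via $F \mapsto \bigcap_{P \in F}\pi(P)$ to a map sending a $k$-flat to an $(n-3-k)$-flat in $\mathbb{R}P^{n-2}$, and the extended map remains an involution satisfying $F \subseteq G \iff \pi(G) \subseteq \pi(F)$. In particular the polar of a hyperplane is a point, and the polar of the $(n-4)$-flat $h_i = H_i \cap H_i'$ is the line $H_i''$ appearing in the statement.

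Granted this machinery, the lemma reduces to a two-line duality argument in each direction. For the forward implication I would assume that $h_1,\ldots,h_m$ all lie on a common hyperplane $\Pi$, set $Q := \pi(\Pi)$ (which is a single point), and apply inclusion-reversal to $h_i \subseteq \Pi$ to obtain $Q \subseteq \pi(h_i) = H_i''$ for every $i$; hence all the lines $H_i''$ pass through $Q$ and are concurrent. Conversely, if the lines $H_i''$ share a common point $Q$, I would set $\Pi := \pi(Q)$ and use $\pi^{2}=\mathrm{id}$ together with inclusion-reversal on $Q \subseteq H_i''$ to conclude that $h_i = \pi(H_i'') \subseteq \pi(Q) = \Pi$ for every $i$, so the $h_i$ all lie on $\Pi$.

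The only step requiring genuine care is the extension of $\pi$ to arbitrary flats and the survival of its inclusion-reversing property at that level; this is a standard consequence of $\pi$ being an involutory correlation, but I would record it explicitly at the start so that the two inclusions above are unambiguous. Beyond that preparatory remark no calculation is needed, and I do not anticipate any substantive obstacle in executing the argument.
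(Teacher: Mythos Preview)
Your proposal is correct and follows essentially the same duality argument as the paper: both directions amount to applying the inclusion-reversing property of the polarity to the containments $h_i\subseteq\Pi$ and $Q\in H_i''$, respectively. The only difference is cosmetic---you spell out explicitly how the point--hyperplane polarity extends to an inclusion-reversing involution on all flats, whereas the paper invokes the same fact implicitly through the language of conjugate points and hyperplanes.
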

\begin{proof}
Assume first that $h_{1},...,h_{m}$ lie on a hyperplane $L$. Then
$L$ is a conjugate hyperplane with respect to each $H_{i}''$. Therefore
the $H_{i}''$ all pass through the pole of $L$, which is a point.

Conversely, assume that the $H_{i}''$ are concurrent at a point $P$.
Then $P$ is conjugate to each $h_{i}$, so the $h_{i}$ all lie on
the polar of $P$, which is a hyperplane. 

\begin{figure}[h]
\includegraphics[scale=0.35]{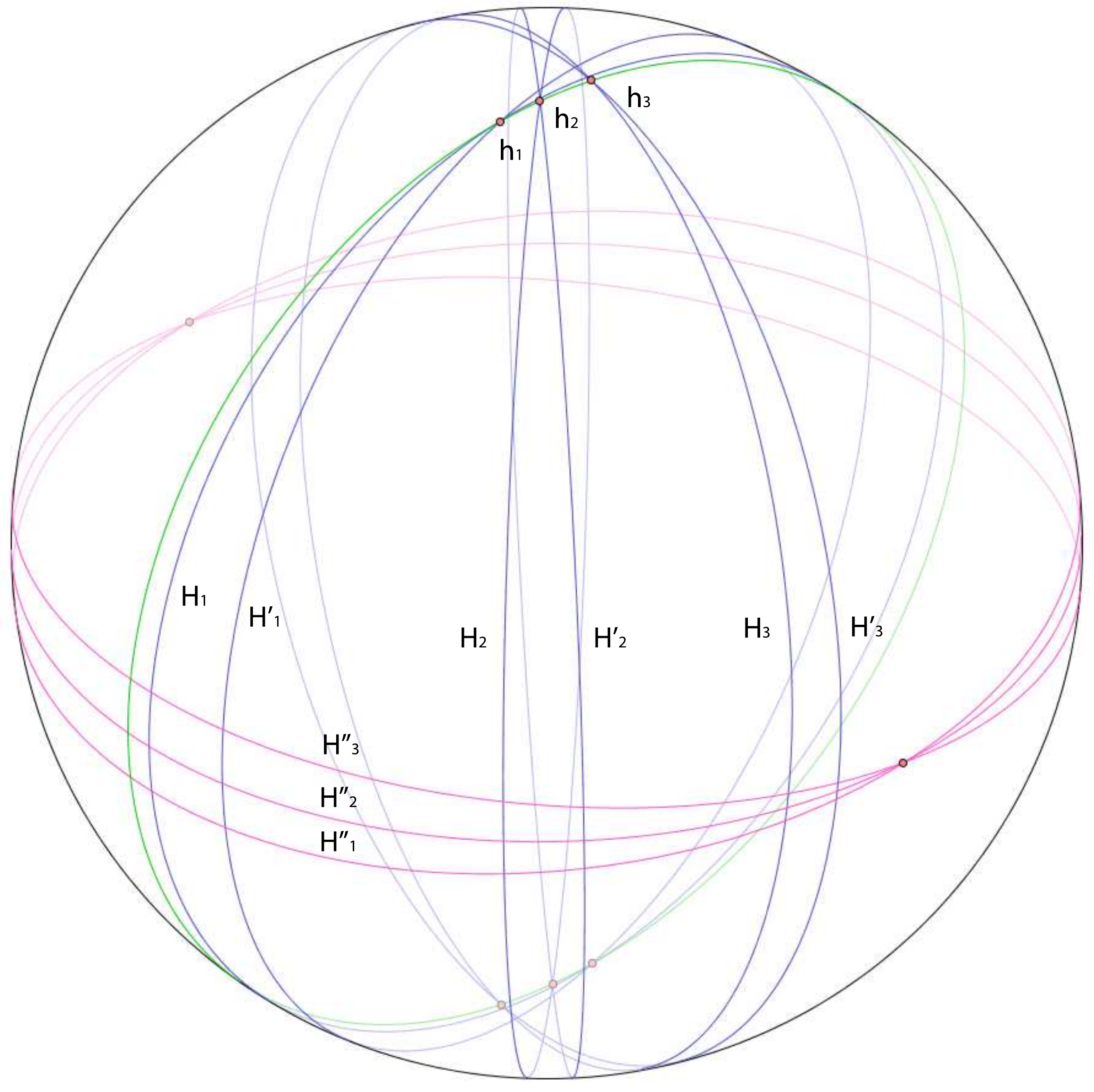}

\caption{Lemma \ref{lem:quadric and polars} on $\mathbb{S}^{2}$. Each $H_{i}''$
$(i=1,2,3)$ is the common perpendicular of $H_{i}$ and $H_{i}'$.
The points $h_{1},h_{2},h_{3}$ are collinear implying that $H_{1}'',H_{2}'',H_{3}''$
are concurrent, and conversely. }

\end{figure}

\end{proof}
The analogue of lemma \ref{lem:quadric and polars} in Euclidean geometry
that is of interest to us is the following trivial statement. As before,
we have $m$ pairs of hyperplanes $H_{i}$ and $H_{i}'$ for $i=1,...,m$.
Then $H_{i}$ and $H_{i}'$ are parallel if and only if some $m$
lines $H_{1}'',...,H_{m}''$, with each $H_{i}$ perpendicular to
both $H_{i}$ and $H_{i}'$, are concurrent. We are now ready to prove
the following Theorem:
\begin{thm}
\label{thm: Perspectivity in a point}In $(n-2)$-dimensional Euclidean,
Hyperbolic and Elliptic geometries, the sets of $n$ points $N^{(k)}$
and $N^{(k+2)}$ are perspective in a point.\end{thm}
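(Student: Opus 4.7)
The plan is to derive the concurrence of the lines $V_i V_i^{(k+2)}$, $i = 1, \ldots, n$, from Lemma \ref{lem:quadric and polars} in the Hyperbolic and Elliptic cases, and from its Euclidean analogue (stated immediately before the theorem) in the Euclidean case. In both settings the task reduces to the same structure: find, for each $i$, a pair of hyperplanes $(H_i, H_i')$ whose associated line $H_i''$ (the polar of $h_i = H_i \cap H_i'$ in the non-Euclidean case, the common perpendicular in the Euclidean case) coincides with the line $V_i V_i^{(k+2)}$, and then verify the hypothesis of the lemma, namely that the $n$ flats $h_i$ lie on a common hyperplane in the projective case, or that each $H_i$ is parallel to $H_i'$ in the Euclidean case.

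The natural source of such hyperplanes is the mediator structure of the construction itself. The key incidence to exploit is that, for any $j$, the mediator $PB(V_j V_{j+1})$ contains precisely $n-2$ points of $N^{(k+1)}$---namely $V_\ell^{(k+1)}$ with $\ell \neq j, j+1$---and the analogous statement holds one generation up, coupling the configurations at three consecutive levels. My first guess for the pair is to take, for each $i$, one mediator hyperplane from generation $k$ and one from generation $k+1$ that are both ``locally attached'' to the vertex $V_i$ (for example $PB(V_{i-1}V_i)$ paired with $PB(V_i^{(k+1)} V_{i+1}^{(k+1)})$, or a symmetric variant). The justification that the polar of their intersection passes through $V_i$ and $V_i^{(k+2)}$ should come from the fact that $V_i$ is characterized, among points of $N^{(k)}$, by its conjugacy relative to the equidistance flat cut out by the consecutive mediators at its level, and similarly for $V_i^{(k+2)}$ one level up.

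The hard part will be the first step: correctly identifying the pair $(H_i, H_i')$ and verifying that $H_i''$ is exactly $V_i V_i^{(k+2)}$ in all three geometries. The projective framework unifies the three cases, but the pole of a mediator hyperplane behaves quite differently in Euclidean (an ideal point in a specific direction) than in Hyperbolic or Elliptic geometry, so the computation must be carried out carefully in the common projective setting, with the three geometries then recovered by specializing the absolute quadric $\Gamma$. Once the right pair has been fixed, the coplanarity (respectively the parallelism) of the $h_i$ needed to invoke Lemma \ref{lem:quadric and polars} should follow from the $n$-fold cyclic symmetry of the construction, since the defining equidistance conditions at generation $k+1$ are precisely those at generation $k$ shifted by one index, and this rigid compatibility is what globally ties the $n$ flats into a single hyperplane.
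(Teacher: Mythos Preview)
Your plan has a genuine gap at precisely the step you flag as ``the hard part'': there is no natural pair of hyperplanes in the construction whose common perpendicular is the line $V_iV_i^{(k+2)}$. Your trial pair, say $H_i = PB(V_{i-1}V_i)$, already fails: the unique line through $V_i$ perpendicular to $PB(V_{i-1}V_i)$ is the line $V_{i-1}V_i$ itself, so you would need $V_i^{(k+2)}$ to lie on $V_{i-1}V_i$, which is false generically. The same obstruction defeats the symmetric variants. More structurally, the construction hands you two families of ``perpendicular'' data --- segments $V_jV_\ell$ perpendicular to their mediators, and lines $V_a^{(k+1)}V_b^{(k+1)}$ perpendicular to the facet hyperplanes $H_{a,b}^{(k)}$ (since both endpoints are equidistant from $N^{(k)}_{a,b}$) --- and neither of these is a line of the form $V_iV_i^{(k+2)}$. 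So the forward direction of Lemma~\ref{lem:quadric and polars} cannot be fed directly with the lines you want to prove concurrent.

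The paper sidesteps this by running Lemma~\ref{lem:quadric and polars} in the \emph{converse} direction and then invoking Desargues. Fix $a$ and, for each $b\neq a$, take the facet hyperplanes $H_{a,b}$ of the simplex $S_a$ on $N^{(k)}\setminus\{V_a\}$ and the corresponding facets of $S_a^{(k+2)}$. The line $V_a^{(k+1)}V_b^{(k+1)}$ is a common perpendicular to this pair (it is the axis of points equidistant from $N^{(k)}_{a,b}$, and it is perpendicular to its own mediator, which is the opposite facet at level $k+2$). These $n-1$ lines visibly concur at $V_a^{(k+1)}$, so the converse of the lemma forces the $(n-4)$-flats $H_{a,b}\cap H_{a,b}^{(k+2)}$ onto a common hyperplane. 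The generalized Desargues theorem then gives that $S_a$ and $S_a^{(k+2)}$ are perspective from a point $W$; varying $a$ and using that the simplices overlap in $n-2$ vertices shows all these centers coincide. The missing idea in your outline is exactly this detour through the intermediate generation $k+1$ together with Desargues; trying to realize $V_iV_i^{(k+2)}$ itself as a polar line is not how the argument goes.
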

\begin{proof}
Without loss of generality, assume that $k=1$. For simplicity, we
will denote $N^{(1)}$ by $N$, and the points $V_{i}^{(1)}$ similarly.
Let $N_{a,b}=\{V_{1},V_{2},...,V_{n}\}\setminus\{V_{a},V_{b}\}$ and
let $H_{a,b}$ be the supporting hyperplane of $N_{a,b}$. Define
$H_{a,b}^{(2)}$ similarly. By construction, line $V_{a}^{(1)}V_{b}^{(1)}$is
a common perpendicular to $H_{a,b}$ and $H_{a,b}^{(2)}$. As we vary
$b\in\{1,...,n\}\setminus\{a\}$, we obtain $n-1$ such lines all
concurrent at point $V_{a}^{(1)}$. By the converse of lemma \ref{lem:quadric and polars}
with $m=n-1$, the elements of the set $\{H_{a,b}\cap H_{a,b}^{(2)}\vert b\in\{1,...,n\}\setminus\{a\}\}$
lie on a hyperplane. Now consider the simplices $S_{a}=V_{a+1}\cdot\cdot\cdot V_{n+a-1}$
and $S_{a}^{(2)}=V_{a+1}^{(2)}\cdot\cdot\cdot V_{n+a-1}^{(2)}$. The
facets of $S_{a}$ and $S_{a}^{(2)}$ are $H_{a,b}$ and $H_{a,b}^{(2)}$
with $b\neq a$, respectively. We apply the generalized Desargues
theorem for $d$-dimensional space to the two simplices (see \cite{Bell}),
to conclude that they are perspective in a point. Call this point
$W$. By considering another pair of simplices, we conclude that they
too must be perspective in $W$, because the simplicies of the same
generation share parts, so that $N$ and $N^{(2)}$ are in perspective
about $W$.

\begin{figure}[h]
\includegraphics[scale=0.26]{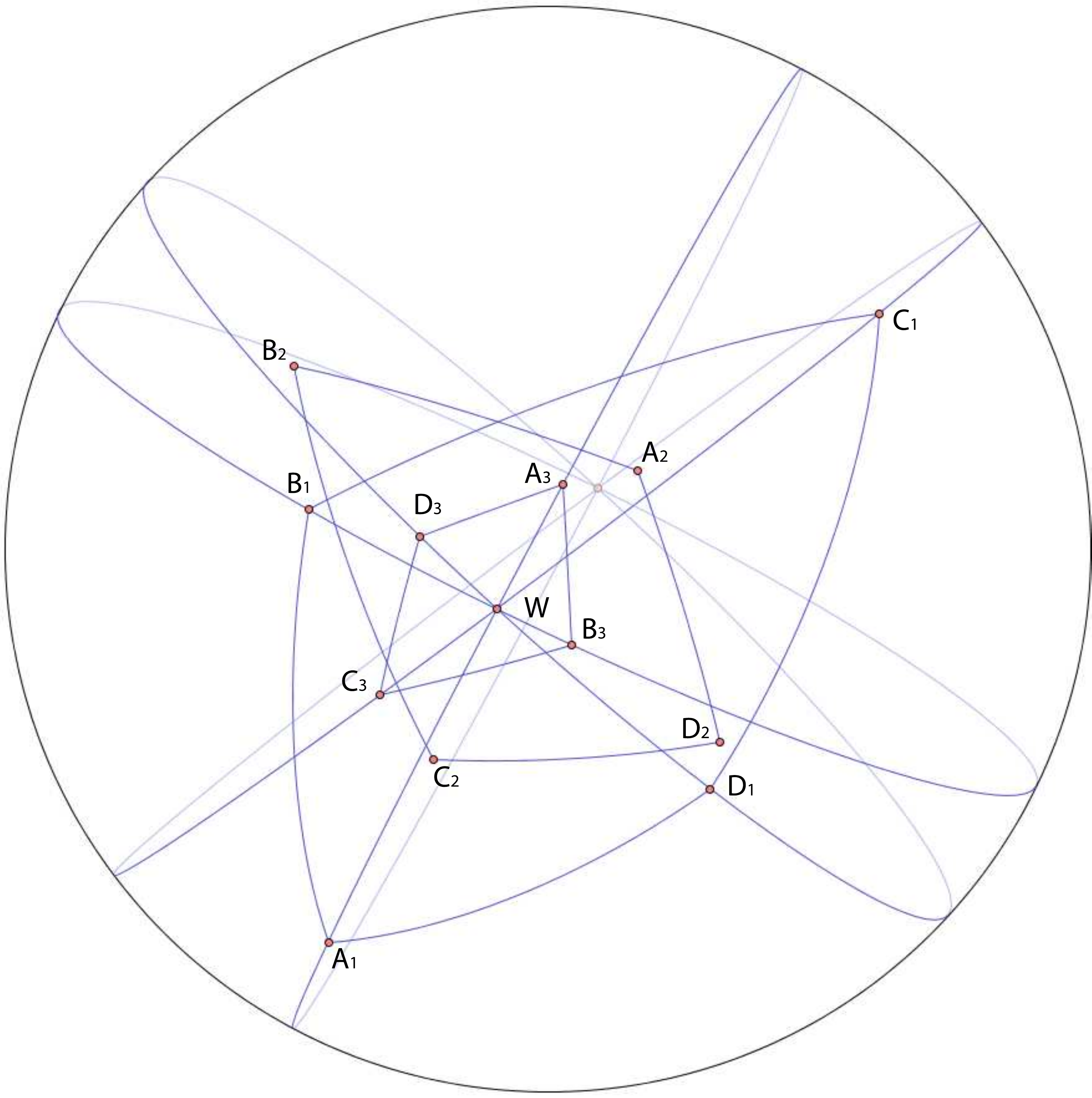}\includegraphics[scale=0.45]{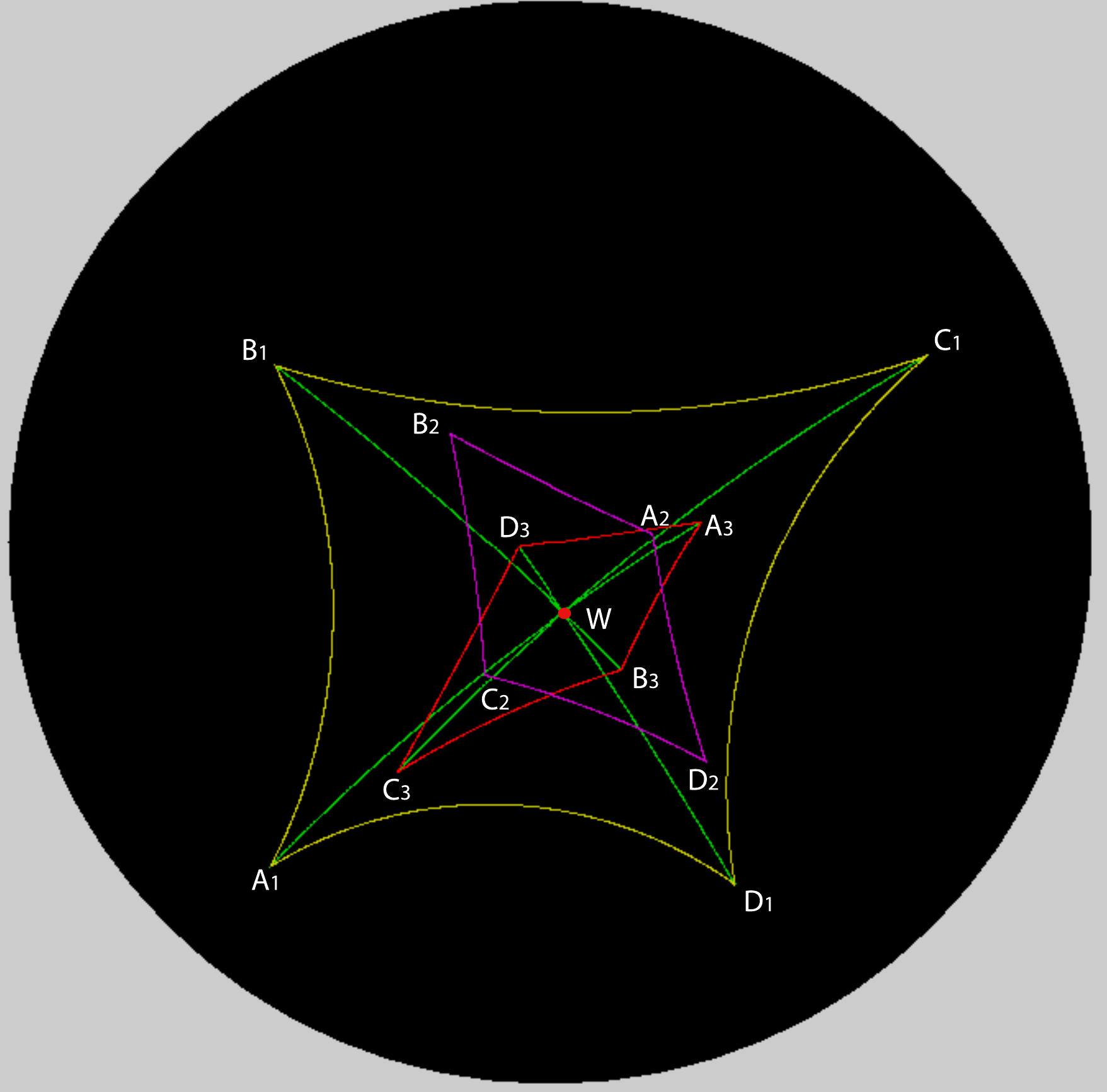}

\caption{Two special cases of Theorem \ref{thm: Perspectivity in a point}:
On the left is $\mathbb{S}^{2}$ and on the right is $2$-dimensional
Hyperbolic space viewed in the Poincar\'{e} disk model. The points
$A_{j},B_{j},C_{j}$ and $D_{j}$ are the members of the set $N^{(j)}$.
The point $W$ is the point about which $N^{(1)}$ and $N^{(3)}$
are in perspective. }

\end{figure}

\end{proof}
From the proof, it is not hard to see that
\begin{cor}
\label{thm:odd and even homothetic}In $(n-2)$-dimensional Euclidean
geometry, all sets of the form $N^{(2i+1)}$ are homothetic, and all
sets of the form $N^{(2k)}$ are homothetic.
\end{cor}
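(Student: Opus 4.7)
My plan is to identify a rigidity of edge directions inherited by two generations of the construction, and then combine this with the perspectivity from Theorem~\ref{thm: Perspectivity in a point} to upgrade perspectivity into homothety.

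The key observation I would establish first is the identity
\[
H_{i,j}^{(k+1)} \;=\; PB\bigl(V_i^{(k)} V_j^{(k)}\bigr)
\]
for every pair $i\neq j$ and every generation $k$. The reason is that for any $\ell\notin\{i,j\}$, the point $V_\ell^{(k+1)}$ is by definition the circumcenter of the hypersphere through $\{V_m^{(k)} : m\neq\ell\}$, a set which contains both $V_i^{(k)}$ and $V_j^{(k)}$; hence $V_\ell^{(k+1)}$ is equidistant from these two points and lies on $PB(V_i^{(k)} V_j^{(k)})$. This gives $n-2$ points common to the two hyperplanes $H_{i,j}^{(k+1)}$ and $PB(V_i^{(k)} V_j^{(k)})$, which by the standing nondegeneracy hypotheses forces equality.

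With this in hand, the line $V_i^{(k+2)} V_j^{(k+2)}$ connects two circumcenters of hyperspheres whose intersection contains the $n-2$ points of $H_{i,j}^{(k+1)}$; therefore it is perpendicular to $H_{i,j}^{(k+1)}$. Combining with the identity above, $V_i^{(k+2)} V_j^{(k+2)}$ is perpendicular to $PB(V_i^{(k)} V_j^{(k)})$ and hence parallel to $V_i^{(k)} V_j^{(k)}$. Thus every edge of the polytope on $N^{(k+2)}$ is parallel to the corresponding edge of the polytope on $N^{(k)}$.

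To finish, I invoke Theorem~\ref{thm: Perspectivity in a point}, which supplies a point $W_k$ such that the lines $V_i^{(k)} V_i^{(k+2)}$ all concur at $W_k$. In Euclidean geometry, two simplices which are perspective from a point and whose corresponding edges are pairwise parallel are related by a homothety centered at that point (this is the standard characterization of central similarity); the parallelism guarantees the linear part is a scalar, and the concurrence pins down the center. Therefore $N^{(k)}$ and $N^{(k+2)}$ are homothetic for every $k$. Iterating within parity and using that the composition of homotheties is again a homothety, all sets $N^{(2k)}$ are homothetic to one another, and all sets $N^{(2k+1)}$ are homothetic to one another.

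The main obstacle is really only the first step: verifying that the $n-2$ points $\{V_\ell^{(k+1)} : \ell\neq i,j\}$ genuinely span the perpendicular bisector $PB(V_i^{(k)} V_j^{(k)})$ as a hyperplane, which requires the nondegeneracy hypothesis on $N^{(k)}$ that has already been secured in the preceding lemmas. Once this identification is in place, the parallelism of edges and the promotion of perspectivity to homothety are formal.
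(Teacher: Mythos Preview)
Your proof is correct and follows essentially the route the paper leaves implicit: the identification $H_{i,j}^{(k+1)} = PB(V_i^{(k)} V_j^{(k)})$ and the perpendicularity $V_i^{(k+2)} V_j^{(k+2)} \perp H_{i,j}^{(k+1)}$ are precisely the Euclidean ingredients already present in the proof of Theorem~\ref{thm: Perspectivity in a point}, and you assemble them to obtain the edge parallelism that upgrades perspectivity to homothety. Since the paper only says the corollary is ``not hard to see'' from that proof, your writeup is a faithful and more explicit version of the intended argument.
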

\begin{flushleft}
We will now show how to reverse the iterative construction, so that
given $N^{(i+1)}$ we can determine $N^{(i)}$. Recall that the isogonal
conjugate of a point $P$ with respect to a triangle $\triangle ABC$
in the plane is the point of intersection of the three lines obtained
by reflecting line $PA$ in the angle bisector of $\angle A$, line
$PB$ in the angle bisector of $\angle B$ and line $PC$ in the angle
bisector of $\angle C$. In case that $P$ lies on the circumcircle
of $\triangle ABC$, the isogonal conjugate is an ideal point. For
a more thorough discussion of isogonal conjugation in $\mathbb{R}^{2}$
and $\mathbb{R}^{3}$, we refer the reader to \cite{Grinberg} and
\cite{Court} respectively.
\par\end{flushleft}

\begin{flushleft}
For our purposes, we will not be using this definition of the isogonal
conjugate due to the ease and generality of the following definition,
which is equivalent to the former in $\mathbb{R}^{2}$ and $\mathbb{R}^{3}$:
\par\end{flushleft}
\begin{defn*}
\begin{flushleft}
Let $S=P_{1}\cdot\cdot\cdot P_{d+1}$ be a simplex in $d$-dimensional
space $V$, $P$ be a point not equal to $P_{1},...,P_{d+1}$ and
$P_{1}',...,P_{d+1}'$ be the $d+1$ reflections of $P$ in the facets
of $S$. Then the isogonal conjugate of $P$ with respect to $S$,
denoted by $Iso_{S}P=Iso_{P_{1}\cdot\cdot\cdot P_{d+1}}P$, is the
center of the hypersphere $(P_{1}'\cdot\cdot\cdot P_{d+1}')$. 
\par\end{flushleft}
\end{defn*}
\begin{flushleft}
The following property shows that isogonal conjugation is an involution.
\par\end{flushleft}
\begin{lem}
\label{lem:Iso Iso P =00003D P}With respect to any simplex $S$ in
Euclidean, Hyperbolic or Elliptic geometry, $Iso_{S}Iso_{S}P=P$.\end{lem}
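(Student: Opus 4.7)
The plan is to exploit the fact that reflection across a hyperplane is an isometry in each of the three geometries (Euclidean, Hyperbolic, Elliptic), and that it is an involution. This is essentially all that is needed.

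First, I would set $Q = \mathrm{Iso}_{S}P$. Writing $F_i$ for the facet of $S$ opposite to $P_i$, and $\sigma_i$ for the reflection in $F_i$, by definition $Q$ is equidistant from the points $P_i' = \sigma_i(P)$, since $Q$ is the center of the hypersphere through $P_1',\dots,P_{d+1}'$. Call this common distance $r$.

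Next, let $Q_i' = \sigma_i(Q)$ be the reflections of $Q$ in the facets of $S$; these are precisely the auxiliary points used to compute $\mathrm{Iso}_{S}Q$. Because $\sigma_i$ is an isometry and involutive,
\[
d(P,Q_i') \;=\; d\bigl(\sigma_i(P),\sigma_i(Q_i')\bigr) \;=\; d(P_i',Q) \;=\; r.
\]
Hence $P$ is equidistant from $Q_1',\dots,Q_{d+1}'$, so it is the center of the hypersphere $(Q_1'\cdots Q_{d+1}')$. By the definition of isogonal conjugation this says exactly $\mathrm{Iso}_{S}Q = P$, i.e. $\mathrm{Iso}_{S}\mathrm{Iso}_{S}P = P$.

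The only subtlety I anticipate is well-definedness: one must check that the $d+1$ reflected points are affinely independent (so that the circumscribing hypersphere exists and is unique) and, in the Hyperbolic case, that its center is ordinary rather than ideal or hyperideal. These are genericity conditions of the same flavor as the non-degeneracy assumption already imposed on $N^{(1)}$, and they can be justified by noting that the reflections inherit the genericity of $P$ relative to $S$; no further geometric input is needed. The main conceptual step, namely the equality $d(P,Q_i') = d(Q,P_i')$, is immediate from the isometry/involution property of reflection, which holds uniformly in Euclidean, Hyperbolic, and Elliptic geometries.
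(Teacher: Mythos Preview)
Your argument is correct and is essentially identical to the paper's own proof: both set $Q=\mathrm{Iso}_S P$, use that reflection in a facet is an involutive isometry to get $d(P,Q_i')=d(Q,P_i')$, and conclude that $P$ is equidistant from the $Q_i'$. Your added remarks on the genericity/well-definedness issue go slightly beyond what the paper records, but the core reasoning matches exactly.
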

\begin{proof}
Let $Q=Iso_{S}P$ and for $i=1,...,d+1$, let $P_{i}'$ and $Q_{i}'$
be the reflection of $P$, respectively $Q$, in the facet opposite
to $P_{i}$. We then have $PQ_{i}'=QP_{i}'$ for each $i=1,...,d+1$.
As $Q$ is the center of the hypersphere $(P_{1}'\cdot\cdot\cdot P_{d+1}')$,
we also have $QP_{i}'=QP_{j}'$ for every $i,j\in\{1,...,d+1\}$.
Since $PQ_{j}'=QP_{j}'$, $PQ_{j}'=QP_{i}'=PQ_{i}'$, so that $P$
is equidistant from all the $Q_{k}'$. Therefore $P$ is the center
of the hypersphere $(Q_{1}'Q_{2}'\cdot\cdot\cdot Q_{d+1}')$.
\end{proof}
Recall that we are using the notation that $V_{i}^{(2)}=V_{i+n}^{(2)}$.
The following Theorem allows us to reverse the iterative process:
\begin{thm}
\label{Thm:Reversing with Isogonal Conjugation}In $n$-dimensional
Euclidean, Hyperbolic and Elliptic geometry, $Iso_{V_{i+1}^{(2)}\cdot\cdot\cdot V_{n+i-1}^{(2)}}V_{i}^{(2)}=V_{i}$.\end{thm}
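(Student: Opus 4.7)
The plan is to show that the facets of the simplex $S=V_{i+1}^{(2)}\cdot\cdot\cdot V_{n+i-1}^{(2)}$ are exactly the mediator hyperplanes $PB(V_iV_j)$ for $j\in\{1,\ldots,n\}\setminus\{i\}$, and then to exploit the fact that reflection across $PB(V_iV_j)$ is an isometry interchanging $V_i$ and $V_j$. Once both are in hand, the conclusion will drop out of the definition of $Iso_S$ as the center of the hypersphere through the reflected copies of $V_i^{(2)}$.

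First I would fix $j\neq i$ and identify the facet of $S$ opposite $V_j^{(2)}$. This facet is the affine span of the $n-2$ points $\{V_k^{(2)}:k\neq i,j\}$. By construction each $V_k^{(2)}$ is the circumcenter of the hypersphere through $\{V_\ell:\ell\neq k\}$, hence is equidistant from $V_i$ and $V_j$; equivalently, $V_k^{(2)}\in PB(V_iV_j)$. All $n-2$ such points therefore lie on $PB(V_iV_j)$, and by the earlier affine-independence lemma for $N^{(2)}$ (in the Euclidean case), respectively the tacit nondegeneracy hypothesis (in the Hyperbolic and Elliptic cases), they in fact span this hyperplane. So the facet of $S$ opposite $V_j^{(2)}$ coincides with $PB(V_iV_j)$. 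Writing $P_j'$ for the reflection of $V_i^{(2)}$ in that facet, the fact that this reflection is an isometry swapping $V_i$ with $V_j$ yields
\[
d(V_i,P_j')=d(V_j,V_i^{(2)})=r_i,
\]
where $r_i$ is the circumradius of $(V_{i+1}\cdot\cdot\cdot V_{n+i-1})$; the decisive point is that $r_i$ does \emph{not} depend on the choice of $j$.

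Since $d(V_i,P_j')=r_i$ for every $j\neq i$, the point $V_i$ is equidistant from all $n-1$ reflections $P_j'$, which is precisely the statement that $V_i$ is the center of the hypersphere through the $P_j'$. By the definition of isogonal conjugation this says $Iso_S V_i^{(2)}=V_i$, as required. I expect the main obstacle not to be the Euclidean case, which is essentially a one-line consequence of the facet/mediator identification, but rather to confirm this identification in the Hyperbolic and Elliptic settings: one must check that the $V_k^{(2)}$ remain independent enough to span $PB(V_iV_j)$ and that ``equidistant from $V_i$ and $V_j$'' still characterizes $PB$. Both follow from the polarity formalism summarized in the Preliminaries, but deserve explicit justification so that the isometry argument for $P_j'$ carries over verbatim to the non-Euclidean geometries.
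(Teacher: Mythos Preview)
Your proof is correct, and the first step---identifying the facet of $S$ opposite $V_j^{(2)}$ with the mediator $PB(V_iV_j)$---is exactly what the paper does. From that point on, however, you take a different route. The paper reflects $V_i$ (not $V_i^{(2)}$) across the facets of $S$: since the facets are the mediators $PB(V_iV_j)$, these reflections are precisely the points $V_j$, whose circumcenter is $V_i^{(2)}$ by definition; this gives $Iso_S V_i=V_i^{(2)}$, and the desired $Iso_S V_i^{(2)}=V_i$ then follows by invoking the involution Lemma (that $Iso_S Iso_S P=P$). You instead reflect $V_i^{(2)}$ itself and use the isometry swapping $V_i\leftrightarrow V_j$ to pin down $d(V_i,P_j')=r_i$, obtaining $Iso_S V_i^{(2)}=V_i$ directly. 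Your argument is thus self-contained and bypasses the involution lemma entirely; the paper's argument, in exchange, never has to reason about the abstract reflected points $P_j'$, since reflecting $V_i$ lands on the concrete vertices $V_j$ and the circumcenter identification is immediate.
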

\begin{proof}
Consider the reflections of the vertex $V_{i}$ in each of the facets
of the simplex $V_{i+1}^{(2)}\cdot\cdot\cdot V_{n+i-1}^{(2)}$. Since
the facets of this simplex are the mediators $PB(V_{i}V_{j}),\forall j\in\{1,...,n\}\setminus\{i\}$,
reflecting $V_{i}$ in them results in the points $V_{j}$, $\forall j\in\{1,...,n\}\setminus\{i\}$.
The center of the hypersphere $(V_{i+1}\cdot\cdot\cdot V_{n+i-1})$
is by definition $V_{i}^{(2)}$, so that $Iso_{V_{i+1}^{(2)}\cdot\cdot\cdot V_{n+i-1}^{(2)}}V_{i}=V_{i}^{(2)}$.
By lemma \ref{lem:Iso Iso P =00003D P}, $Iso_{V_{i+1}^{(2)}\cdot\cdot\cdot V_{n+i-1}^{(2)}}V_{i}^{(2)}=V_{i}$.
\end{proof}
We will now shift our attention from the sets $N^{(i)}$ to the polytopes
$P^{(i)}$ with vertices $N^{(i)}$.
\begin{defn*}
Two polytopes $P$ and $P'$ are said to be \textit{combinatorially
equivalent }(or of the same \textit{combinatorial type}) provided
there exists a bijection $\phi$ between the set $\{F\}$ of all faces
of $P$ and the set $\{F'\}$ of all faces of $P'$, such that $F_{1}\subset F_{2}$
if and only if $\phi(F_{1})\subset\phi(F_{2})$ \cite{Grunbaum2}.
\end{defn*}
We will say that $P^{(i)}\sim P^{(j)}$ when a polytope $P^{(i)}$
with vertices $N^{(i)}$ can be chosen to have the same combinatorial
type as a polytope $P^{(j)}$ with vertices $N^{(j)}$, and $P^{(i)}$
and $P^{(j)}$ are similar.

Corollary \ref{thm:odd and even homothetic} then implies that in
$(n-2)$-dimensional Euclidean geometry,

\[
P^{(1)}\sim P^{(3)}\sim P^{(5)}\sim...
\]

and

\[
P^{(2)}\sim P^{(4)}\sim P^{(6)}\sim...
\]

Let $|P^{(i)}|$ denote the volume of $P^{(i)}$. From Corollary \ref{thm:odd and even homothetic}
it follows that for all $i,j$ and $k$, 
\[
\frac{|P^{(i)}|}{|P^{(i+2k)}|}=\frac{|P^{(j)}|}{|P^{(j+2k)}|}.
\]

In the case $d=2$, it is also true that $\frac{|P^{(i)}|}{|P^{(i+k)}|}=\frac{|P^{(j)}|}{|P^{(j+k)}|}$
. In fact, it is shown in \cite{RadkoTsukerman} that 
\[
\frac{|P^{(k+1)}|}{|P^{(k)}|}=\frac{1}{4}(\cot\alpha+\cot\gamma)\cdot(\cot\beta+\cot\delta),
\]
where $\alpha,\beta,\gamma$ and $\delta$ are the angles of the quadrilateral
$P^{(1)}$. However, experiment shows that the ratio of volumes of
consecutive polyheda is not in general only dependent on $P^{(1)}$.
Another property that holds for $d=2$, but not generally, is that
if $P^{(1)}$ is nondegenerate and noncyclic, then $P^{(2)}$ is never
cyclic. An easy way to see this is by applying isogonal conjugation
as in lemma \ref{Thm:Reversing with Isogonal Conjugation}, which
shows that $P^{(1)}$ must be at infinity. On the other hand, for
$d=3$, we can construct an example where $P^{(1)}$ is nondegenerate
and nonconhyperspherical and $P^{(2)}$ is conhyperspherical by using
the same lemma \ref{Thm:Reversing with Isogonal Conjugation} (see
figure \ref{fig:cyclic P^1}) because the isogonal conjugate of a
point on the circumsphere is not in general at infinity.

\begin{figure}[h]
\includegraphics[scale=0.35]{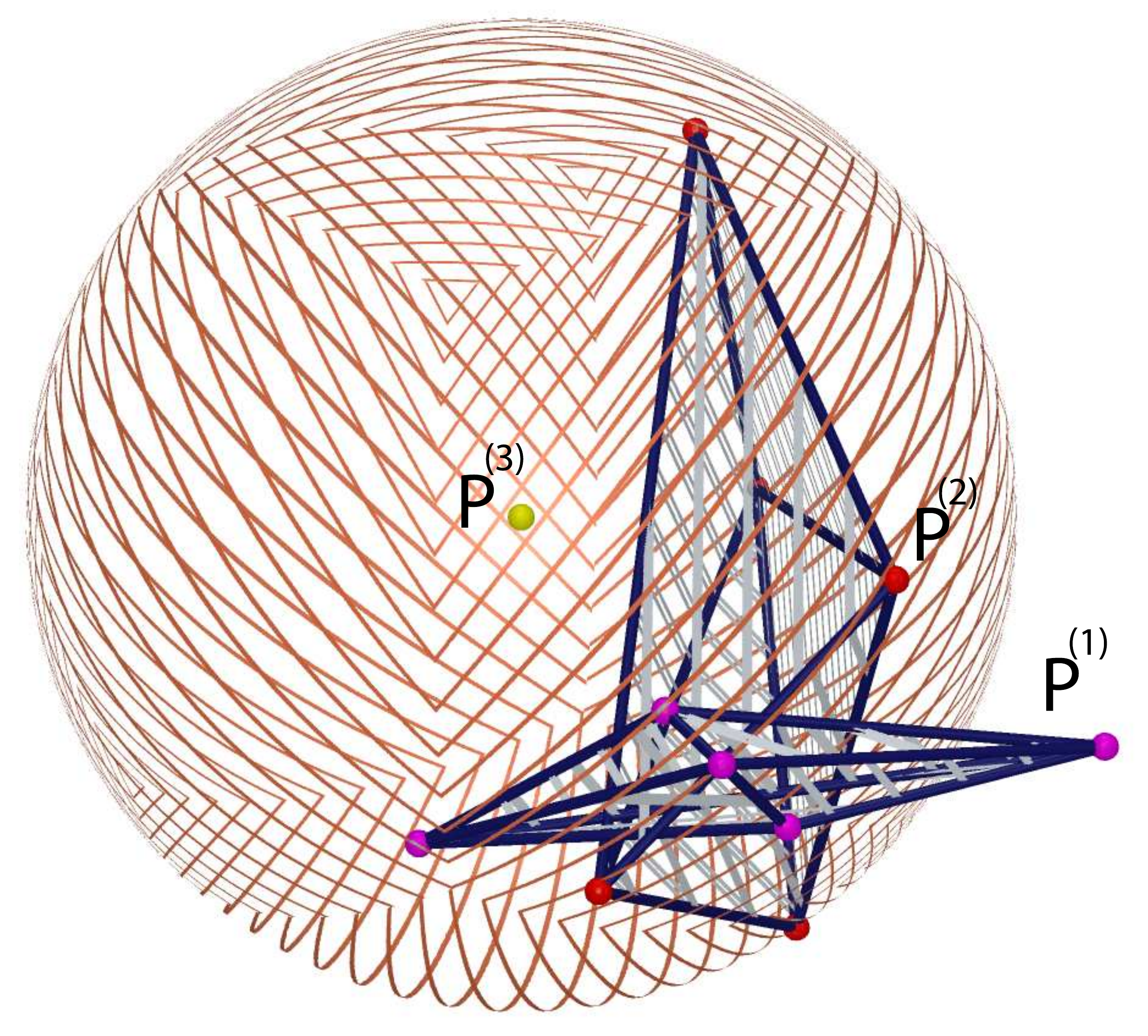}

\caption{\label{fig:cyclic P^1}The polyhedron $P^{(1)}$ is constructed to
be not conhyperspherical. The polyhedron $P^{(2)}$ obtained from
$P^{(1)}$ via the Perpendicular Bisector Construction, on the other
hand, is inscribed in a sphere. The next generation polyhedron $P^{(3)}$
is the center of the sphere. This phenomenon that $P^{(i)}$ is noncyclic
but $P^{(i+1)}$ is cyclic cannot occur in $\mathbb{R}^{2}$.}

\end{figure}

\section{The Isoptic Point in $\mathbb{R}^{d}$}

We now show that any pair of odd and any pair of even generation polytopes
are homothetic about the same point:
\begin{thm}
\label{thm:W is universal in R^n}The center of homothety $W^{(1)}$
of any pair of polytopes $P^{(2i+1)}$, $P^{(2j+1)}$ coincides with
the center of homothety $W^{(2)}$ of any pair of polytopes $P^{(2k)}$,
$P^{(2l)}$. \end{thm}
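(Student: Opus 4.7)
The plan is to exploit the fact that the perpendicular bisector construction $\mathcal{C}: N^{(k)} \mapsto N^{(k+1)}$ is equivariant under Euclidean similarities. Since the vertex $V_i^{(k+1)}$ is defined as the circumcenter of $\{V_j^{(k)} : j \neq i\}$, and circumcenters of finite point sets commute with every similarity $\sigma$ of $\mathbb{R}^{d}$, one has $\mathcal{C}(\sigma N) = \sigma\,\mathcal{C}(N)$ with matching vertex labels. In particular, $\mathcal{C}$ commutes with every homothety.

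By Theorem \ref{thm: Perspectivity in a point}, $N^{(1)}$ and $N^{(3)}$ are perspective from a point $W_1$, and by Corollary \ref{thm:odd and even homothetic} they are in fact homothetic. Because the perspector of two homothetic labeled configurations must coincide with the center of their homothety, $W_1$ is the center of the homothety $h_1$ taking $V_i^{(1)}$ to $V_i^{(3)}$ for every $i$. Applying $\mathcal{C}$ to the identity $N^{(3)} = h_1(N^{(1)})$ and invoking the equivariance,
\[
N^{(4)} = \mathcal{C}(N^{(3)}) = \mathcal{C}(h_1(N^{(1)})) = h_1(\mathcal{C}(N^{(1)})) = h_1(N^{(2)}).
\]
Hence $P^{(2)}$ and $P^{(4)}$ are related by exactly the same homothety $h_1$ with the same center $W_1$; in particular $W^{(2)} = W_1$.

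Iterating the argument by applying $\mathcal{C}^{2i}$ to the relation $N^{(3)} = h_1(N^{(1)})$ yields $N^{(2i+1)} = h_1^{\,i}(N^{(1)})$ and $N^{(2k)} = h_1^{\,k-1}(N^{(2)})$ for all $i,k \geq 1$. Every positive power of $h_1$ is again a homothety centered at $W_1$, so any pair of odd-indexed polytopes, and any pair of even-indexed polytopes, is related by a homothety with center $W_1$. This shows simultaneously that $W^{(1)}$ and $W^{(2)}$ are well-defined and that both coincide with $W_1$.

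The proof is short once the equivariance of $\mathcal{C}$ is recognized, so there is no substantial obstacle. The only point requiring attention is that the vertex labels on the two sides of $N^{(3)} = h_1(N^{(1)})$ must agree with the index-by-excluded-vertex bookkeeping used by $\mathcal{C}$; this is automatic since the perspectivity in Theorem \ref{thm: Perspectivity in a point} already matches $V_i^{(1)}$ with $V_i^{(3)}$, so no relabeling is introduced when $\mathcal{C}$ is applied.
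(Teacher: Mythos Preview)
Your argument is correct and takes a genuinely different route from the paper's. The paper argues by building auxiliary right triangles: for $c\notin\{a,b\}$, the point $V_c^{(2)}$ lies on the mediator of $V_aV_b$, so $V_aMV_c^{(2)}$ (with $M$ the midpoint of $V_aV_b$) is a right triangle, and similarly $V_a^{(3)}M^{(3)}V_c^{(4)}$; since $P^{(1)}\sim P^{(3)}$ these triangles are similar and hence homothetic, forcing the line $V_c^{(2)}V_c^{(4)}$ through $W^{(1)}$. Varying $c$ then pins down $W^{(2)}=W^{(1)}$. Your proof instead observes once and for all that the map $\mathcal{C}$ is equivariant under similarities, so the label-preserving homothety $h_1:N^{(1)}\to N^{(3)}$ is transported by $\mathcal{C}$ to the very same $h_1:N^{(2)}\to N^{(4)}$, and by iteration to every pair of same-parity generations. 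This is more conceptual and yields the stronger conclusion $N^{(m+2)}=h_1(N^{(m)})$ for all $m$ in one stroke, whereas the paper's triangle argument is more hands-on and ties the result visibly to the perpendicular-bisector geometry.

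One small point to tighten: when you combine Theorem~\ref{thm: Perspectivity in a point} and Corollary~\ref{thm:odd and even homothetic}, you implicitly use that the homothety in the corollary is \emph{label-preserving}, i.e.\ sends $V_i^{(1)}$ to $V_i^{(3)}$. The corollary as stated only asserts homothety of the point sets, but its derivation ``from the proof'' of Theorem~\ref{thm: Perspectivity in a point} does give the label-matching (in the Euclidean case the corresponding facets are parallel and the perspectivity already matches indices). It would be cleaner either to cite this explicitly, or to bypass Theorem~\ref{thm: Perspectivity in a point} altogether: once a label-preserving homothety $h_1:N^{(1)}\to N^{(3)}$ is known to exist, its center is automatically the common $W$, and the perspectivity theorem is not needed for the present statement.
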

\begin{proof}
Let $M$ be the midpoint of segment $V_{a}V_{b}$ and $M^{(3)}$ that
of $V_{a}^{(3)}V_{b}^{(3)}$. For $c\notin\{a,b\}$, $V_{c}^{(2)}$
lies on the perpendicular bisector of $V_{a}V_{b}$, so that $V_{a}MV_{c}^{(2)}$
forms a right triangle. Similarly, $V_{a}^{(3)}M^{(3)}V_{c}^{(4)}$
is a right triangle. Since $P^{(1)}\sim P^{(3)}$, and $V_{c}$ ($V_{c}^{(3)})$
is the center of the hypersphere through all $V_{i}$ ($V_{i}^{(3)})$
, $i\in\{1,...,n\}\setminus\{c\}$, the two triangles are similar,
hence homothetic. Therefore $W^{(1)}=V_{b}V_{b}^{(3)}\cap M^{(1)}M^{(3)}\cap V_{c}^{(2)}V_{c}^{(4)}$.
Now consider $V_{d}^{(2)}$ and $V_{d}^{(4)}$ in place of $V_{c}^{(2)}$
and $V_{c}^{(4)}$ for some $d\notin\{a,b,c\}$. Then by the same
reasoning, $W^{(1)}=V_{b}V_{b}^{(3)}\cap M^{(1)}M^{(3)}\cap V_{d}^{(2)}V_{d}^{(4)}$.
But $W^{(2)}=V_{c}^{(2)}V_{c}^{(4)}\cap V_{d}^{(2)}V_{d}^{(4)}$.
Therefore $W^{(1)}=W^{(2)}$ .
\end{proof}
\begin{figure}[h]
\includegraphics[scale=0.3]{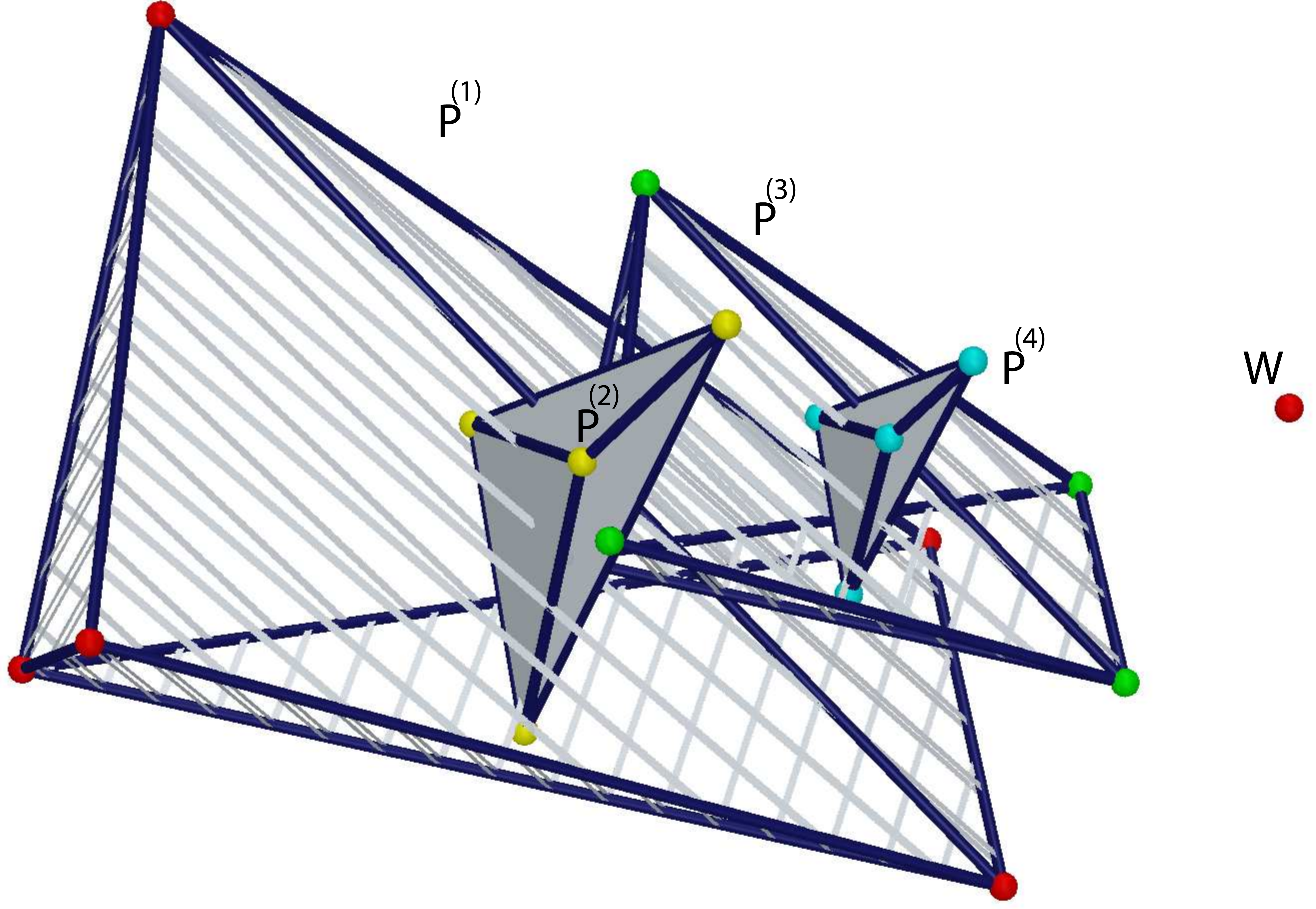}

\caption{An example illustrating Theorem \ref{thm:W is universal in R^n}:
The polyhedra $P^{(1)}$ and $P^{(3)}$ and the polyhedra $P^{(2)}$
and $P^{(4)}$ in $\mathbb{R}^{3}$ are homothetic about the same
point $W$.}
\end{figure}

We will call this {}``universal'' center of homothety $W$. This
point can be seen as the limit of the construction when $\frac{|P^{(1)}|}{|P^{(3)}|}>1$
and the limit of the reverse construction when $\frac{|P^{(1)}|}{|P^{(3)}|}<1$. 

In the case $d=2$, this point is called the \textit{Isoptic point
}due to its property of subtending equal angles at each triad circle
of the quadrilateral\textit{ }(see Radko and Tsukerman \cite{RadkoTsukerman})\textit{.
}In $\mathbb{R}^{2}$, $W$ has many properties that are analogous
to those of the circumcenter. More generally, if $N^{(1)}$ is approaching
a conhyperspherical configuration, then the limit of $W$ is the circumcenter
of $N^{(1)}$.

Finally, we pose the following problem. It is shown in \cite{RadkoTsukerman}
that in $\mathbb{R}^{2}$, the ratio of similarity of $P^{(i)}$ to
$P^{(i+2)}$ is equal to the following expressions: 
\[
\frac{1}{4}(\cot\alpha+\cot\gamma)\cdot(\cot\beta+\cot\delta)=\frac{1}{4}(\cot\alpha_{1}-\cot\beta_{2})\cdot(\cot\delta_{2}-\cot\gamma_{1})
\]
\[
=\frac{1}{4}(\cot\delta_{1}-\cot\alpha_{2})\cdot(\cot\beta_{1}-\cot\gamma_{2}),
\]

where the angles $\alpha_{i},\beta_{i},\gamma_{i},\delta_{i}$, $i=1,2,$
are the angles formed between sides and diagonals of a quadrilateral
(see figure \ref{fig:angles between sides diagonals}) and $\alpha=\alpha_{1}+\alpha_{2}$,
$\beta=\beta_{1}+\beta_{2}$, etc. Is there a similar expression for
the ratio of similarity in $\mathbb{R}^{n}$?

\begin{figure}[h]
\begin{centering}
\includegraphics[scale=0.14]{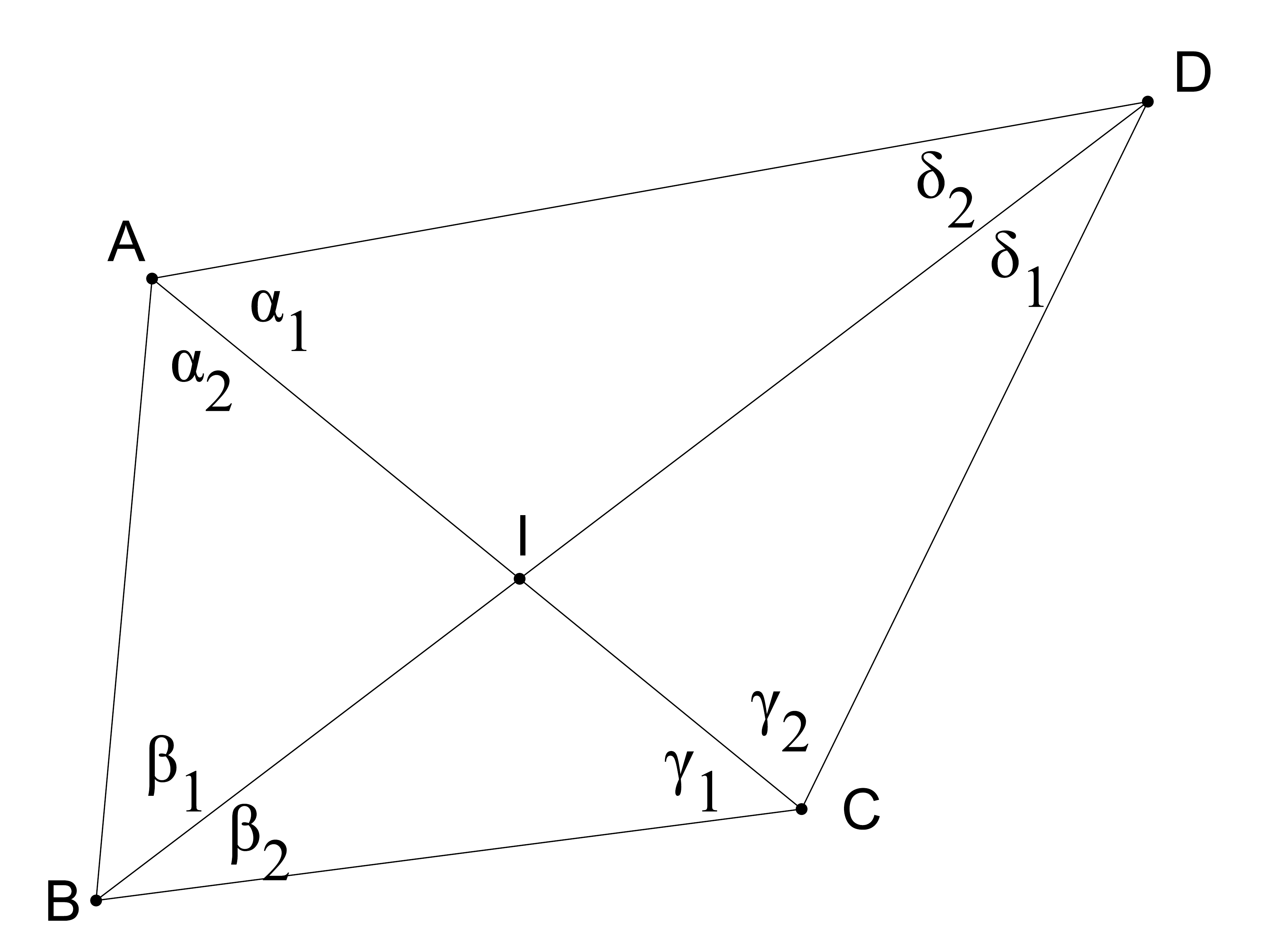}
\par\end{centering}

\caption{\label{fig:angles between sides diagonals}The angles between the
sides and diagonals of a quadrilateral.}
\end{figure}

$ $

Emmanuel Tsukerman: Stanford University

\textit{E-mail address: emantsuk@stanford.edu}
\end{document}